\newtheorem{theo}{Theorem}
\newtheorem{theoa}{Theorem}
\newtheorem{def1}{Definition}
\newtheorem{prop}{Proposition}
\newtheorem{lem1}{Lemma}
\def\L{\Lambda}
\def\l{\lambda}
\def\Z{\mathbb{Z}}
\def\R{\mathbb{R}}
\def\1{\mathbf{1}}
\def\eps{\epsilon}
\begin{document}
\title {frame-type families of translates}
\author{shahaf nitzan-hahamov and alexander olevskii}
\date{}

\thanks{Supported in part by the Israel Science Foundation}

\maketitle
%\markright{frame type approximation property}

\thispagestyle{empty}
\begin{small}
$\!\!\!\!$School of Mathematical Sciences, Tel Aviv University,
Ramat-Aviv, Israel 69978

$\!\!\!\!$e-mail addresses: olevskii@math.tau.ac.il

$\:\:\:\:\:\:\:\:\:\:\:\:\:\:\:\:\:\:\:\:\:\:\:\:\:\:\:\:\:\:\:$
nitzansi@post.tau.ac.il

\end{small}

$\:\:$

\pagestyle{myheadings}

\begin{abstract}
We construct a uniformly discrete, and even sparse, sequence of
real numbers $\L=\{\l_n\}$ and a function $g\in L^2(\R)$, such
that for every $q>2$, every function $f\in L^2(\R)$ can be
approximated with arbitrary small error by a linear combination
$\sum c_n g(t-\l_n)$ with an $l_q$ estimate of the coefficients:
\[
\|\{c_n\}\|_{l_q}\leq C(q)\|f\|.
\]
This can not be done for $q=2$, according to ~\cite{cdh}.

$\:$
\[
\textbf{Keywords:}\textrm{ translates } \cdot \textrm{ frames }
\cdot \textrm{ completeness with estimate of coefficients }
\]
 $\textbf{Mathematics Subject Classification:}$ 42C15 $\cdot$
42C30

\end{abstract}

%\footnotetext{xfdsf}

\section{INTRODUCTION. RESULTS.}
\subsection{}

Let $\Lambda$ be a uniformly discrete set of real numbers:
\begin{equation}
\inf_{\l \neq \l' }|\l-\l'|=\delta>0,\:\:\:\:\:\:\:\:\:\:\:\:
\l,\l' \in
 \L.
\end{equation}

Given a function $g\in L^2(\R)$, consider the family of
translates
\begin{equation}
\{g(t-\l)\}_{\l \in \L}.
\end{equation}
It is well known that for $\L=\Z$ this family cannot be complete
in $L^2(\R)$. It was  conjectured that the same is true for every
uniformly discrete set $\L$ (see, for example, ~\cite{rs}, p.149,
where even a stronger conjecture related to Gabor-type systems
is discussed).
However, this is not the case. The following theorem was proved in
~\cite{ol}:

\begin{theoa}
Let $\L=\{\l_n\}_{n\in \Z}$ be an "almost integer" spectrum:
\begin{equation}
\l_n=n+\alpha_n\,:\:\:\:\:\:\:0<|\alpha_n|\rightarrow
0\:\:\:\:(|n|\rightarrow \infty).
\end{equation}
Then there exists a "generator" $g$ such that the family $(2)$ is
complete in $L^2(\R)$.
\end{theoa}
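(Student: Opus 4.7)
By Plancherel, the family $\{g(t-\l_n)\}_{n\in\Z}$ is complete in $L^2(\R)$ if and only if the only $f\in L^2(\R)$ satisfying
\[
\int_{\R}\hat f(\xi)\,\overline{\hat g(\xi)}\,e^{2\pi i\l_n\xi}\,d\xi=0\qquad(n\in\Z)
\]
is $f=0$. Two easy observations constrain $\hat g$: it must be nonvanishing almost everywhere (otherwise any $\hat f$ supported where $\hat g$ vanishes gives a counterexample), and it cannot have compact support (otherwise the relations give no information about $\hat f$ off $\mathrm{supp}\,\hat g$). So $\hat g$ has to spread over all of $\R$.

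The construction I would try is a rapidly convergent superposition of shifted bumps adapted to the lattice underlying $\L$. Partition $\R=\bigsqcup_{k\in\Z}I_k$ with $I_k=[k-\tfrac12,k+\tfrac12]$, and let
\[
\hat g(\xi)=\sum_{k\in\Z}c_k\,\phi(\xi-k),
\]
where $\phi$ is strictly positive and smooth on $I_0$ and $(c_k)$ is a positive, rapidly decreasing sequence chosen so that $\hat g\in L^2(\R)$. Splitting the orthogonality integral over the $I_k$, substituting $\xi=k+u$, and using $e^{2\pi i\l_nk}=e^{2\pi i\alpha_nk}$ (since $nk\in\Z$), the condition becomes the doubly-indexed system
\[
\sum_{k\in\Z}\bar c_k\,e^{2\pi i\alpha_nk}\,a_k(n)=0\quad(n\in\Z),\qquad a_k(n):=\int_{-1/2}^{1/2}\hat f(k+u)\,\overline{\phi(u)}\,e^{2\pi i\l_nu}\,du.
\]

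The classical input is the Paley--Wiener/Kadec perturbation theorem: since $\alpha_n\to0$, all but finitely many $\l_n$ lie within Kadec's $\tfrac14$-neighborhood of $\Z$, so $\{e^{2\pi i\l_nu}\}_{n\in\Z}$ is complete in $L^2(-\tfrac12,\tfrac12)$ (indeed a Riesz basis after a finite-dimensional correction). Hence for each $k$ the sequence $(a_k(n))_n$ determines $\hat f(k+\cdot)\,\overline{\phi(\cdot)}$, and it suffices to force $a_k(n)=0$ for every $k$ and $n$.

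The hard part lies here: one has only a single linear relation per $n$, and as $|n|\to\infty$ the twists $e^{2\pi i\alpha_nk}\to1$ degenerate for each fixed $k$, so no uniform decoupling across $k$ is possible. My plan is to argue inductively on $|k|$. For $|n|$ large, the $k=0$ term is untwisted while the other twists are close to $1$; exploiting the rapid decay of $(c_k)$ and the freedom in $\phi$ should isolate the $k=0$ contribution and force $\hat f|_{I_0}=0$. Once $k=0$ is eliminated, the residual relations can be re-used to kill $\hat f|_{I_{\pm1}}$, then $I_{\pm2}$, and so on. The essential use of $\alpha_n\neq0$ is that at each inductive step some $n$ supplies a nontrivial phase $e^{2\pi i\alpha_nk}$ separating the new interval from those already handled. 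I expect the principal technical obstacle to be the balancing of three competing scales---the decay rate of $(c_k)$, the speed at which $\alpha_n\to0$, and the Paley--Wiener constants of the perturbed exponential system---so that the inductive propagation of ``$\hat f$ vanishes on $I_k$'' remains quantitatively under control.
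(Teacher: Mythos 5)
There is a genuine gap, and it sits exactly where you locate it yourself. Your reduction (Plancherel, partition into unit intervals, Kadec/Paley--Wiener completeness of $\{e^{2\pi i\l_n u}\}$ on $(-\tfrac12,\tfrac12)$) only tells you that it would \emph{suffice} to have $a_k(n)=0$ for all $k$ and $n$; but the orthogonality hypothesis gives you a single scalar relation $\sum_k \bar c_k e^{2\pi i\alpha_n k}a_k(n)=0$ per $n$, and no argument is supplied for passing from these mixed relations to the individual vanishing. The heuristic you offer for the first inductive step is in fact backwards: as $|n|\to\infty$ the twists $e^{2\pi i\alpha_nk}$ tend to $1$ \emph{uniformly on every finite range of} $k$, so the relations degenerate toward the relations one would have for $\L=\Z$ --- and for integer translates completeness is known to fail, so ``the other twists are close to $1$'' cannot isolate the $k=0$ block; the $k\neq0$ terms remain present with their fixed coefficients $\bar c_k$. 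The only place where the hypothesis $\alpha_n\neq0$ produces genuinely new information is the regime $|k|\gtrsim 1/|\alpha_n|$, i.e.\ the far tail of $\hat f$, where your construction has multiplied everything by the rapidly decreasing $c_k$; converting that exponentially weak global information into local vanishing on $I_0$ is the entire content of the theorem, and the proposal gives no mechanism for it. Note also that with a generator fixed in advance (bumps with a ``generic'' rapidly decreasing $(c_k)$) the statement is not expected to hold for every admissible $\L$: the generator must be adapted to the decay rate of $\{\alpha_n\}$, which is precisely the ``balancing of scales'' you defer.

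The route actually taken (in~\cite{ol}, and mirrored in Section~2 of this paper for the stronger $(QF)$ statement) is different in kind: one conjugates by the unitary $U_w:f\mapsto\widehat{(fw^{1/2})}$, so that completeness of the translates of $g=\widehat{\sqrt w}$ in $L^2(\R)$ becomes completeness of the exponentials $E(\L)$ in the weighted space $L^2_w(\R)$, and then one constructs the weight $w$ (equivalently $|\hat g|^2$) \emph{inductively together with} the approximation: a dense family $f_k$ is approximated on $[-k,k]$ by trigonometric polynomials with spectrum in $\L$ up to small exceptional sets (Landau-- and Menshov-type lemmas, as in Lemmas~2--4 here), and the weight is then made tiny on those exceptional sets so the approximation becomes an $L^2_w$ approximation. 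The generator is the output of this adaptive construction, not an a priori choice; this is what circumvents the one-equation-per-$n$ obstruction on which your plan stalls.
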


One may wish to construct a uniformly discrete set of translates
$(2)$ with a stronger property then just completeness. However,
one should keep in mind that no family (2) can be a frame, see
~\cite{cdh}.

\subsection{}

In ~\cite{haol} we introduced an intermediate property between completeness and frame, which is
reproduced here in a slightly different form:

\begin{def1}
We say that a system of vectors $\{u_n\}$ in a Hilbert space $H$
is a $(QF)$-system if the following two conditions are fulfilled:

\medskip
$(i)$ for every $q>2$ there is a constant $C(q)$, such that given
$f\in H$ and $\epsilon>0$, one can find a linear combination
\begin{equation}
Q=\sum c_nu_n,
\end{equation}
satisfying the conditions:
\begin{equation}
\|f-Q\|<\epsilon,
\end{equation}
and
\begin{equation}
\|\{c_n\}\|_{l_q}\leq C(q)\|f\|.
\end{equation}

$(ii)$ $($Bessel inequality$):$
\[
(\sum|\langle f,u_n \rangle|^2)^{\frac{1}{2}}\leq
C'\|f\|,\:\:\:\:\;\:\:\forall f\in H,
\]
where the constant $C'$ does not depend on $f$.
\end{def1}

Approximation property $(i)$ above means "completeness with $l_q$
estimate of coefficients". Using the standard duality argument (see ~\cite{haol}), it can be reformulated   as follows:
\begin{equation}
\|f\|\leq C(p)(\sum|\langle f,u_n
\rangle|^p)^{\frac{1}{p}},\:\:\:\:\:\:\:1/p+1/q=1.
\end{equation}

If the condition $(i)$ in Definition 1 is required for q=2 then it
is identical to usual definition of frames. So, one may regard
$(QF)$-systems as a sort of "quasi-frames".

One may ask about the relation of $ (QF)$ -systems to $p$-frames
defined in ~\cite{ast}. Notice that in the case of Hilbert space,
$p$-frames may only exist for $p=2$, when they are identical to
usual frames.

\subsection{}

In ~\cite{haol} we have constructed sparse exponential systems
\[
E(\L):=\{e^{i\l x}\}_{\l\in \L}
\] which are $(QF)$-systems in
$L^2(S)$ for "generic" sets $S$ of large measure. Observe that these systems cannot be frames, due to
celebrated Landau's  density theorem.

 The goal of this work is to present a similar construction for the
 translates. Our main result is the following

\begin{theo}
There exist a uniformly discrete sequence $ \L=\{\lambda_n \}$
and a function $g\in L^2(\R)$, such that the system $(2)$ is a
$(QF)$-system for $L^2(\R)$.
\end{theo}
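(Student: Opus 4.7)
I would first pass to the Fourier side: by Plancherel, the translates $\{g(t-\l)\}_{\l\in\L}$ form a $(QF)$-system in $L^2(\R)$ if and only if their Fourier images $\{\hat g(\xi) e^{-i\l\xi}\}_{\l\in\L}$ do. Using the dual reformulation $(7)$, it suffices to produce $\hat g\in L^2(\R)$ and a uniformly discrete $\L$ such that, for every $p<2$,
\[
\|F\|\le C(p)\Bigl(\sum_{\l\in\L}\Bigl|\int F(\xi)\overline{\hat g(\xi)}\,e^{i\l\xi}\,d\xi\Bigr|^p\Bigr)^{1/p}\qquad(F\in L^2(\R)),
\]
together with the Bessel inequality. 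This reframes the problem as one about a weighted exponential system in which the weight $\hat g$ is at our disposal.

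Next, I would import the $(QF)$-system of exponentials of \cite{haol}: for a suitable ``generic'' set $S\subset\R$ of large measure, there exists a sparse uniformly discrete $\L$ such that $E(\L)$ is a $(QF)$-system in $L^2(S)$. The natural candidate for $\hat g$ is a function that is essentially $\1_S$ on $S$ but supplemented with a small, spread-out perturbation off $S$ so that $\hat g\ne 0$ almost everywhere (necessary for completeness in $L^2(\R)$) while remaining in $L^2$. A concrete model is $\hat g=\1_S+\eps h$, with $h$ piecewise constant on a partition of $\R\setminus S$ with $L^2$-summable, slowly decaying weights, and $\eps$ small. The coefficient functional $\int F\overline{\hat g}\,e^{i\l\xi}\,d\xi$ then splits into a main part $\int_S F\,e^{i\l\xi}\,d\xi$, handled directly by the $(QF)$-inequality on $L^2(S)$, and a small perturbation coming from $\eps h$.

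The main obstacle is balancing two competing requirements: completeness in all of $L^2(\R)$ forces $\hat g$ to be nonzero a.e., whereas the $l_q$ coefficient estimate favours $\hat g$ concentrated on $S$. I expect the proof to proceed by a Neumann-type iteration: use the exponential $(QF)$ to approximate the $S$-part of $F$, transfer the approximation back to $L^2(\R)$ via the perturbed $\hat g$, and iterate to absorb the residual supported outside $S$, controlling the error uniformly in $q>2$ by choosing $\eps$ and the decay rates of $h$ carefully. The Bessel condition $(ii)$, by contrast, should be comparatively routine, following from the uniform discreteness of $\L$ and $\hat g\in L^2\cap L^\infty$ by standard Plancherel--Polya type estimates.
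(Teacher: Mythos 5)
Your opening reduction is the same as the paper's: passing to the Fourier side turns the translates into the exponential system $E(\L)$ in the weighted space with weight $w=|\hat g|^2$, and the whole problem becomes the construction of a weight and a uniformly discrete $\L$ for which $E(\L)$ is a $(QF)$-system in $L^2_w(\R)$. But the way you then handle the complement of $S$ is where the argument breaks. Your plan fixes the weight in advance, $\hat g=\1_S+\eps h$, and hopes to treat the off-$S$ part as a small perturbation absorbed by a Neumann-type iteration. This is not a perturbative situation: take $F$ supported in $\R\setminus S$; then the ``main part'' $\int_S F e^{i\l\xi}\,d\xi$ vanishes identically, so there is no invertible operator to perturb around, and the required inequality $\|F\|\le C(p)\bigl(\sum_\l|\eps\int F\bar h e^{i\l\xi}\,d\xi|^p\bigr)^{1/p}$ demands that the system $\{h(\xi)e^{-i\l\xi}\}_{\l\in\L}$ be, by itself, complete with $l_q$-estimates on the unbounded set $\R\setminus S$ --- a problem of exactly the same nature and difficulty as the one you started with. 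Moreover, the sparse $\L$ imported from \cite{haol} was chosen to work for the bounded set $S$ and has no reason to work for a pre-chosen $h$ off $S$; the paper's Remark 2.3 (Hadamard-lacunary $\L$ fails for every weight) and Proposition 2 (the weight cannot be too regular, $g\notin L^1$) show that $\L$ and the weight off $S$ must be coupled delicately, not chosen independently. The paper's resolution is precisely to build them together: by induction it produces Menshov-type polynomials $Q_k$ with sparse spectra, small $\|Q_k\|_{2+\delta_k}$, approximating a dense family $f_k$ \emph{in measure} on $[-k,k]$ (Lemmas 2--4), and only afterwards defines the weight, crushing it on the exceptional sets where the approximation fails; this converts approximation in measure into approximation in the weighted $L^2$ norm, which is the mechanism your fixed-weight scheme lacks.

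A secondary inaccuracy: the Bessel condition is not ``routine from uniform discreteness and $\hat g\in L^2\cap L^\infty$.'' Boundedness of the weight does not imply the Bessel property of $\{ \hat g e^{-i\l\xi}\}$ for a uniformly discrete $\L$ (already for $\L=\Z$ one needs the periodization of $|\hat g|^2$ to be bounded, which can fail with $|\hat g|\le 1$). The paper instead majorizes the weight by one whose square root has Fourier transform supported in $[-\delta/2,\delta/2]$, $\delta$ the separation constant, so the corresponding translates are orthogonal (Lemma 1 and Remark 2.2); with your discontinuous $\hat g$ you would need a substitute argument, e.g.\ a Schur test on the Gram matrix exploiting the sparsity of $\L$, which you have not supplied.
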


We will prove this result in a stronger form, showing that $\L$
can be chosen sparse:

\begin{theo}
Given a sequence of positive numbers $\{\eps_n\}=o(1)$, one can
choose $\L$ in Theorem $1$ so that
\begin{equation}
\frac{\l_{n+1}}{\l_n}>1+\eps_n.
\end{equation}
\end{theo}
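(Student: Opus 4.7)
By Parseval's identity
\[
\langle f,g(\cdot-\lambda)\rangle=\int \hat f(\xi)\,\overline{\hat g(\xi)}\,e^{i\lambda\xi}\,d\xi,
\]
so the $(QF)$-property for the translates $\{g(\cdot-\lambda_n)\}$ in $L^2(\R)$ is equivalent to a $(QF)$-type inequality for the weighted exponential system $\{\overline{\hat g(\xi)}\,e^{i\lambda_n\xi}\}$ acting on $\hat f\in L^2(\R)$. My plan is to transport the sparse $(QF)$-construction for exponentials from \cite{haol} to this setting, by solving the problem on each of a carefully chosen family of frequency blocks and then splicing the per-block spectra into a single sparse $\Lambda$.

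I would partition the frequency line as $\R=\bigsqcup_k J_k$ and take $\hat g=\sum_k\alpha_k h_k\1_{J_k}$ for profiles $h_k$ bounded above and below on $J_k$ and weights $\alpha_k>0$ chosen so that $\hat g\in L^2(\R)$ while $\hat g\neq 0$ a.e. For each $k$, I would apply the construction of \cite{haol} to produce a sparse sequence $\Lambda_k$ for which $\{e^{i\lambda\xi}\}_{\lambda\in\Lambda_k}$ is a $(QF)$-system in $L^2(J_k)$ with constants uniform in $k$. Using the freedom in \cite{haol}, each $\Lambda_k$ can be rescaled to lie inside a real-line window $[R_k,S_k]$, with internal sparsity strictly stronger than $1+\eps_n$. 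Setting $\Lambda:=\bigsqcup_k\Lambda_k$ and choosing $R_{k+1}/S_k$ large enough, the required global sparsity~$(8)$ is inherited from the block-wise one plus the large inter-window gaps.

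To verify the approximation property, decompose $\hat f=\sum_k\hat f\,\1_{J_k}$ and, for each $k$, apply the per-block $(QF)$-inequality on $L^2(J_k)$ to $\hat f/(\alpha_k h_k)$, producing exponential coefficients $c^{(k)}$ with $\|c^{(k)}\|_{l_q}\lesssim \alpha_k^{-1}\|\hat f\,\1_{J_k}\|_{2}$ and block-wise error below a prescribed threshold. Concatenating the $c^{(k)}$ gives coefficients $c$ indexed by $\Lambda$; since $q>2$, the embedding $\ell^2\hookrightarrow\ell^q$ yields $\|c\|_{l_q}\le C(q)\bigl(\sum_k\alpha_k^{-2}\|\hat f\,\1_{J_k}\|_2^2\bigr)^{1/2}\lesssim\|f\|_2$, provided $\alpha_k^{-1}$ stays bounded---which we can afford by shrinking $|J_k|$ or $\|h_k\|_\infty$ so that $\hat g\in L^2$ is preserved. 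The Bessel inequality is handled in parallel.

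The main obstacle is the cross-block coupling. An inner product $\langle f,g(\cdot-\lambda)\rangle$ with $\lambda\in\Lambda_k$ is \emph{not} a functional of $\hat f|_{J_k}$ alone: it receives a parasite contribution $\int_{J_j}\hat f\cdot\overline{\alpha_j h_j}\,e^{i\lambda\xi}\,d\xi$ from every other block $J_j$. These must be shown to be negligible compared to the principal $j=k$ term, so that the per-block approximations truly combine into an approximation of $f$. I would control them via oscillatory integral / Riemann--Lebesgue estimates: for $|\lambda|\ge R_k$ large and $h_j$ chosen smooth, the integral decays in $\lambda$ at a rate dictated by the smoothness of $h_j$ and the size of $J_j$; by making the profiles $h_j$ smooth enough and the gaps $R_{k+1}/S_k$ large enough, the cross-terms can be absorbed into the $(QF)$-error. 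The real work lies in the simultaneous balancing of $\alpha_k$, $|J_k|$, the smoothness of $h_k$, and the sparsity gaps of the $\Lambda_k$ so that $\hat g\in L^2$, the block $(QF)$-constants stay uniform, the global $l_q$-bound holds, and the cross-block coupling is controlled.
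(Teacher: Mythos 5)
There is a genuine gap, and it lies at the very first load-bearing step of your plan: the per-block input you want to import from \cite{haol} does not exist. You require, for each $k$, a sparse sequence $\Lambda_k$ (with ratios eventually stronger than $1+\eps_n$, so in the relevant regime of slowly decreasing $\eps_n$ the sequence grows almost exponentially and has lower uniform density zero) such that $\{e^{i\lambda\xi}\}_{\lambda\in\Lambda_k}$ is a $(QF)$-system in $L^2(J_k)$, with the profile $h_k$ bounded above \emph{and below} on the full interval $J_k$. But condition $(i)$ of the $(QF)$-definition on a genuine interval (or finite union of intervals) forces the Beurling--Landau bound $D^-(\Lambda_k)\geq m(J_k)/2\pi>0$ -- this is exactly Theorem 1 of \cite{haol}, invoked in Proposition 2 of this paper -- and it is incompatible with the sparsity you impose. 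The construction of \cite{haol} produces $(QF)$ exponential systems only on ``generic'' sets $S$ of large measure, never on intervals; equivalently, the weight must be allowed to collapse on exceptional sets. Your choice of $\hat g$ built from profiles bounded below on each block removes precisely the degree of freedom that makes the theorem true (compare Proposition 2: the generator cannot lie in $L^1$, i.e.\ the weight must be irregular). Dilating or translating the $\Lambda_k$ into far-out windows does not help, since $D^-$ rescales covariantly and the density obstruction reappears.

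For contrast, the paper avoids both this obstruction and your cross-block coupling problem by a different mechanism: the unitary map $U_w:f\mapsto\widehat{(fw^{1/2})}$ turns $e^{i\lambda x}$ in the weighted space $L^2_w(\R)$ exactly into the translate $g(t-\lambda)$, $g=\widehat{\sqrt w}$, with no error terms, so Theorem 2 reduces to building one weight $w$ and one sparse $\Lambda$ with $E(\Lambda)$ a $(QF)$-system in $L^2_w(\R)$. The sparse spectra come from a Menshov-type polynomial $A$ (small $\ell_{2+\mu}$ coefficient norm, close to $1$ off a small set) modulated against a Landau-type approximant $B$ with strongly spread-out dilations $A(r_n x)$, giving polynomials $Q_k$ with lacunary spectra that approximate a dense family $f_k$ only \emph{in measure}, outside small exceptional sets $E_k^c$; the weight $w$ is then constructed adaptively, made tiny on those exceptional sets, so that $\|f_k-Q_k\|_{L^2_w}<1/k$, while the Bessel bound follows from domination by a weight $v$ with $\widehat{\sqrt v}$ compactly supported. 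In other words, the generator and the spectrum must be built together, with the weight hiding the approximation error; fixing $\hat g$ block-by-block in advance, bounded below on intervals, cannot work. The cross-block decay estimates you propose are therefore moot: even granting them, the per-block claim they are meant to splice together is false.
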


Clearly, if $\eps_n$ decreases slowly enough then the gaps in the
spectrum $\L$ grow "almost exponentially". This  condition is sharp,
see  Remark 2.3 below.  Observe that in the context of completeness, this
lacunarity condition  probably first appeared in ~\cite{ol1}, see also
~\cite{olg} and \cite{haol}.

Some remarks on Theorems 1 and 2 are presented in section 2. In
particular, we show (Propositions 1 and 2) that the generator $g$
in these theorems can be chosen infinitely smooth but it cannot
decrease fast at infinity.

\section{PROOFS.}
\subsection{}
Following ~\cite{ol} we start with a reformulation of the result.

For $f\in L^2(\R)$, we denote by $\hat{f}$ its Fourier transform:
$$\hat{f}(t)=\frac{1}{\sqrt{2\pi}}\int_{\R}f(x)e^{ixt}dx.$$

\begin{def1}
A function $w(x) \in L^1(\R)$ is called a
weight if it is strictly positive almost
everywhere on $\R$ (with respect to the Lebesgue measure).
\end{def1}

Let us consider the  weighted space
\[
L^2_w(\R)=\{f: \: \int_{\R}|f(x)|^2w(x)dx <\infty\},
\]
with the scalar product $\langle f, g \rangle =\int
f(x)\bar g(x)w(x)dx$.

Due to the Parseval equality for the Fourier transform, the
transformation
\[
U_w:f\mapsto \widehat{(fw^{\frac{1}{2}})}
\]
is a unitary operator acting from $L^2_w(\R)$ onto $L^2(\R)$.

We set
\[
g(t):=\widehat{(\sqrt{w})}.
\]
Obviously, $g\in L^2(\R)$, and we have:
\[
U_w(e^{i\l x})=g(t-\l).
\]
Since the $(QF)$-property of a system of vectors is invariant with
respect to unitary operators, we  conclude that the system
$\{g(t-\l)\}_{\l \in \L}$ satisfies this property in $L^2(\R)$, if
the system $E(\L):=\{e^{i\l x}\}_{\l \in \L}$ does so in the space
$L^2_w(\R)$. Therefore, Theorem 2 is a consequence of the following
\begin{theo}
There exist a weight $w(x)$ and a uniformly discrete set $\L$
such that
\[
\begin{aligned}
(i)\:\:& \L \textrm{ satisfies }(8)\textrm{ for a pre-given sequence }\{\eps_n\} ;\\
(ii)\:\:& \textrm{The system} \:\:E(\L)\textrm{ is a } (QF)
\textrm{-system }
\textrm{in } L^2_w(\mathbb{R}).\\
\end{aligned}
\]
\end{theo}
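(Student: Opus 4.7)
The plan is to build $w$ and $\L$ by assembling blocks supplied by the $L^2(S)$-based $(QF)$-construction of~\cite{haol} and then glue the block-wise properties into a global $(QF)$-property on $L^2_w(\R)$. For each $k\in\Z$, invoke \cite{haol} to produce a bounded set $S_k$ of large relative measure inside an interval $J_k$, together with a sparse set $\widetilde\L_k$ such that $E(\widetilde\L_k)$ is a $(QF)$-system on $L^2(S_k)$ with $(QF)$ and Bessel constants uniform in $k$.

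Next, I would position these blocks as follows. Modulation $e^{iTx}$ is an isometry of $L^2(S_k)$ that preserves the $(QF)$-property, so shifting $\widetilde\L_k\mapsto\L_k:=\widetilde\L_k+T_k$ costs nothing. Choose the $T_k$'s so that the $\L_k$'s land in pairwise disjoint windows $[M_k,M_{k+1}]$, with $|M_k|\to\infty$ growing fast enough for the union $\L:=\bigsqcup_k\L_k$, enumerated in order, to satisfy $\lambda_{n+1}/\lambda_n>1+\eps_n$ both within and between windows: the within-window sparseness is inherited from $\widetilde\L_k$, the between-window one from the growth of $M_k$. Take the $J_k$'s to partition $\R$ and define $w(x):=\delta_k$ on $S_k$ and $w(x):=\eta_k$ on $J_k\setminus S_k$, with $0<\eta_k\ll\delta_k$ and $\delta_k\to 0$ tuned so that $w\in L^1(\R)$; the smallness of the $\eta_k$'s will suppress contributions from the ``bad'' strips $J_k\setminus S_k$.

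Both conditions of Definition~1 then reduce to block-wise checks with cross-term control. For Bessel, decompose $f = \sum_j f\mathbf{1}_{S_j}+\sum_j f\mathbf{1}_{J_j\setminus S_j}$ and expand $\langle f,e^{i\lambda x}\rangle_w$; the diagonal contribution to $\sum_{\lambda\in\L_k}|\langle f,e^{i\lambda x}\rangle_w|^2$ is bounded by the blockwise Bessel of~\cite{haol}, while the off-diagonal contributions are controlled by a Plancherel--P\'olya estimate for the uniformly discrete $\L_k$ applied to the compactly supported $f\mathbf{1}_{S_j}$, with smallness of $\delta_j$ and $\eta_j$ supplying the required decay. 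For the $(QF)$-approximation, use the blockwise $(QF)$-system $E(\L_k)$ on $L^2(S_k)$ to approximate $f|_{S_k}$ by $Q^{(k)}=\sum c_n^{(k)}e^{i\lambda_n x}$, and set $Q:=\sum_k Q^{(k)}$. The disjointness of the $\L_k$'s yields $\|c\|_{l_q}^q=\sum_k\|c^{(k)}\|_{l_q}^q$, and the restriction $q>2$ permits an $\ell^2$-into-$\ell^q$ embedding argument to convert the block-wise $L^2$-bounds into the required global $l_q$-estimate.

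The main obstacle is cross-block contamination: each $Q^{(k)}$ is a finite exponential sum on all of $\R$, unlocalized, so its $L^2_w$-norm on the other $S_j$ with $j\neq k$ and on $J_j\setminus S_j$ must be forced to be negligible. This demands a simultaneous, delicate tuning of $\delta_k$, $\eta_k$, the window positions $M_k$, and the length of the blockwise approximations --- likely requiring a quantitative reading of~\cite{haol}'s construction rather than its bare statement. The strict inequality $q>2$, consistent with the $q=2$ frame obstruction of~\cite{cdh}, is essential at the $l_q$-aggregation step and explains why the scheme cannot be pushed to the frame regime.
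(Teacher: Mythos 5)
Your outline stalls at exactly the point you flag as ``cross-block contamination'', and this is not a matter of delicate tuning: with the weight fixed in advance it cannot be closed. The only bound the black-box theorem of~\cite{haol} gives on a blockwise approximant $Q^{(k)}=\sum c^{(k)}_n e^{i\lambda_n x}$ is an $\ell_q$-estimate with $q>2$; since the number of terms is not controlled, this gives no control of $\|c^{(k)}\|_{\ell_2}$, and in fact no bound $\|c^{(k)}\|_{\ell_2}\le C\|f\|$ can hold, because that would make $E(\Lambda_k)$ a frame in $L^2(S_k)$, impossible for a sparse $\Lambda_k$ by Landau's density theorem (cf.~\cite{cdh}). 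On the other hand, for a uniformly separated spectrum an Ingham/Plancherel--P\'olya estimate is two-sided: on every sufficiently long interval $J_j$ the $L^2$-mass of $Q^{(k)}$ is comparable from below to $\|c^{(k)}\|_{\ell_2}$. So the leakage of $Q^{(k)}$ into $L^2_w(J_j)$, $j\neq k$, is of order $\sqrt{\eta_j}\,\|c^{(k)}\|_{\ell_2}$ (or $\sqrt{\delta_j}\,\|c^{(k)}\|_{\ell_2}$), and since $\eta_j,\delta_j$ are fixed before $f$ is given while $\|c^{(k)}\|_{\ell_2}/\|f\|$ is unbounded, the total error of $Q=\sum_k Q^{(k)}$ cannot be made small uniformly in $f$. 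A second, independent defect is the normalization: since the $J_k$ exhaust $\R$ and $w\in L^1$ forces $\delta_k\to 0$, the blockwise $(QF)$-constant measured in $L^2_w$ becomes $C(q)\delta_k^{-1/2}$; a unit-norm $f\in L^2_w$ supported on a far $S_k$ then only receives coefficients bounded by $C(q)\delta_k^{-1/2}$, so the required uniform estimate $\|\{c_n\}\|_{\ell_q}\le C(q)\|f\|$ fails.

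The paper's proof avoids both problems by never summing over blocks. One fixes a countable dense family $f_k$ of compactly supported functions and, via Lemma 4 (a Menshov-type polynomial with small $\ell_{2+\delta}$ coefficient norm, Lemma 2, combined with a Landau-type approximation lemma, Lemma 3), produces for each $k$ a \emph{single} polynomial $Q_k$ with sparse spectrum placed beyond the frequencies already used, $\|Q_k\|_{(2+\delta_k)}<\delta_k$, approximating $f_k$ on $[-k,k]$ \emph{in measure} outside a set of measure $<\delta_k$. Only after all the $Q_k$ are fixed is the weight defined: starting from $v$ with $\widehat{\sqrt v}$ supported in $[-1,1]$ (which yields the Bessel property for every $w\le v$ by Lemma 1 and Remark 2.2), one shrinks $v$ on the exceptional sets so that $\|f_k-Q_k\|_{L^2_w}<1/k$. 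An arbitrary unit vector $f$ is then approximated by one single $Q_k$, chosen with $f_k$ close to $f$ and $2+\delta_k<q$, so no cross-term control is ever needed. This adaptation of the weight to countably many polynomials built for a fixed dense family -- rather than approximants built per target $f$ against a pre-assigned weight -- is precisely the device your scheme lacks, and it is what converts approximation in measure into approximation in $L^2_w$ while keeping the coefficient bound trivial.
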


$\:$

$\textbf{Remark 2.1.}$ Reversing the argument above, one can deduce
Theorem 3 from Theorem 2, so that these results are equivalent.

$\:$

Our goal now is to prove Theorem 3.

\subsection{}

We need some lemmas.

\begin{lem1}
Let $\L$ be a uniformly discrete sequence and $v$ be a weight such
that $h:=\widehat{(\sqrt{v})}$ is supported by
$[-\frac{\delta}{2}, \frac{\delta}{2}]$ where $\delta$ is the
separation constant defined in $(1)$. Then $E(\L)$ is a Bessel
system in $L^2_v(\R)$.

\end{lem1}

\begin{proof}
The set of translates $\{h(t-\l)\}$ is an orthogonal system of
vectors in $L^2(\R)$ with bounded norms, so it is a Bessel system
in the space. The argument above shows that this system is
obtained from $E(\L)$ by the action of the unitary operator
\[
U_v:L^2_v(\R)\rightarrow L^2(\R).
\]
The lemma follows.
\end{proof}

$\textbf{Remark 2.2.}$ An equivalent definition of a Bessel system
in $H$:
\[
\|\sum a_n u_n\|_H\leq C'\|\{a_n\}\|_{l_2}
\]
(see ~\cite{you}, p.155), clearly extends the result above to every
weight w, such that $w(x)\leq v(x)$ almost everywhere.

$\:$

Given a trigonometric polynomial $Q(x)=\Sigma_{\lambda \in \Lambda}
c_{\lambda}e^{i\lambda x}$, we set
$$\textrm{spec}\,Q=\{\l \in \Lambda:c_\l\neq 0\},$$
and
$$\|Q\|_{q}=\|\{c_{\lambda}\}\|_{l_q}.$$

The following lemma is well known, it goes back to Menshov-type
representation theorems.

\begin{lem1}
Given a segment $I\subset \R$ and a number $\mu >0$, one can find a
trigonometric polynomial $A(x)=\Sigma_{k=1}^K a_ke^{ikx}$ such
that
\begin{align}
(i)\:\:& \|A\|_{{(2+\mu)}}<\mu; \\
(ii)\:\: & m\{x\in I:|A(x)-1|>\mu\}<\mu.
\end{align}

\end{lem1}
For proof see  ~\cite{katz}, Chapter 4 section 2.5, or ~\cite{olg}, Lemma
4.1 and remark 2 on p.382.

\begin{lem1}
Given a segment $I\subset \R$, a number $\xi>0$ and a function
$f\in L^2(I)$, one can find a trigonometric polynomial
$B(x)=\Sigma_{n=1}^N b_ne^{i\beta_nx}$ such that
\begin{align}
(i)\:\: & 0\leq \beta_n-n<\xi\:\:\:\:\:n=1,2,3\ldots N; \\
(ii)\:\:& m\{x\in I:|f(x)-B(x)|>\xi\}<\xi .
\end{align}

\end{lem1}

This can be easily deduced  from Landau's theorem ~\cite{land}, or from ~\cite{ol}.

\begin{lem1}
Let $I\subset \R$ be a segment. For every $\delta>0$ and $f\in
L^2(I)$, there exists a number $l>0$ such that, given an integer
$d>0$, one can find a trigonometric polynomial
$Q(x)=\Sigma_{m=1}^M c_me^{i\lambda_mx}$ which satisfies
\begin{align}
(i)\:\: & \|Q\|_{{(2+\delta)}}<\delta; \\
(ii)\:\: & \lambda_1\geq d;\\
(iii)\:\: & \frac{\lambda_{m+1}}{\lambda_m}>1+l,\:\:\:\:\:\:\:m=1,2,3\ldots M;\\
(iv)\:\: & m \{x\in I:|f(x)-Q(x)|>\delta \}<\delta.
\end{align}

\end{lem1}
\begin{proof}
We can assume that $I=[-\pi s,\pi s]$ for some integer $s>0$.

 Given
$f\in L^2(I)$ and $0<\delta<1$, denote $\xi =\frac{\delta}{2}$ and
use Lemma 3 to find a trigonometric polynomial,
$$B(x)=\Sigma_{n=1}^N b_ne^{i\beta_nx},$$ for which (11) and (12) hold.

Denote
\begin{equation}
\mu=\frac{\delta}{2N\max{\{1,\|B\|_{{(2+\delta)}}\}}},
\end{equation}
and use Lemma 2 to find a trigonometric polynomial,
$$A(x)=\Sigma_{k=1}^K a_ke^{ikx},$$ for which (9) and (10) hold. Set
\begin{equation}
l=\frac{1}{2+K}\:\:.
\end{equation}

Given a positive integer $d>0$, denote
\begin{equation}
 r_n=d(K+1)^{n-1}
 \end{equation}
and define
\[
Q(x)=\Sigma_{m=1}^M c_me^{i\lambda_mx}:=\Sigma_{n=1}^N
b_ne^{i\beta_nx}A(r_nx),
\]
where $\l_1< \l_2< \l_3< \ldots$

Denote
\[
J_n:=\textrm{spec}(e^{i\beta_nx}A(r_nx))
\]
and use (11) to check that $J_{n+1}$ follows $J_n$, for every
$1\leq n< N$. In particular, this fact combined with (9) and (17)
means that
\[
\|Q\|_{{(2+\delta)}}=\|A\|_{{(2+\delta)}}\|B\|_{{(2+\delta)}}\leq\|A\|_{{(2+\mu)}}
\|B\|_{{(2+\delta)}}< \mu \|B\|_{{(2+\delta)}}< \delta.
\]
So property $(i)$ holds for $Q$.

From (19) we have $\l_1=\beta_1+r_1=\beta_1+d,$ so property $(ii)$
follows from (11).

To establish property $(iii)$ note that there are two possible
locations for $\l_m$ and $\l_{m+1}$ in the spectrum of $Q$. First,
they can both belong to $J_n$, for some $1\leq n \leq N$. In this
case
\[
\frac{\l_{m+1}}{\l_m}=\frac{\beta_n+(k+1)r_n}{\beta_n+kr_n}\:\:\:\:\:\:\:\textrm{for
some }1\leq k < K.
\]
On the other hand, $\l_m$ can be the last frequency in $J_n$, for
some $1\leq n \leq N$, while $\l_{m+1}$ is the first frequency in
$J_{n+1}$. In this case
\[
\frac{\l_{m+1}}{\l_m}=\frac{\beta_{n+1}+r_{n+1}}{\beta_n+Kr_n}.
\]
In both cases it is easy to see that (11), (18) and (19) imply
that $(iii)$ holds for $Q$.

To finish the proof we need to show that property $(iv)$ holds for
$Q$. Note that $I=[-\pi s,\pi s]$, so from (10), (12) and (17), we have
\[
m \{x\in I:|f(x)-Q(x)|>\delta\}\leq
\]
\[
m\{x\in I:|f(x)-B(x)|>\frac{\delta}{2}\}+ m \{x\in
I:|B(x)-Q(x)|>\frac{\delta}{2}\}<
\]
\[
\frac{\delta}{2} + m \{x\in I:|\Sigma_{n=1}^N
b_ne^{i\beta_nx}(1-A(r_nx)) |>\frac{\delta}{2}\}\leq
\]
\[
\frac{\delta}{2}+\Sigma_{n=1}^N m \{x\in
I:|b_n(1-A(r_nx))|>\frac{\delta}{2N}\}\leq
\]
\[
\frac{\delta}{2}+N m \{x \in I:
|1-A(x)|>\frac{\delta}{2N\|\{b_n\}\|_{l_{(2+\delta)}}}\}< \delta
\]

which completes the proof.

\end{proof}

We are now ready to prove Theorem 3.

Let $0<\epsilon_n\rightarrow 0$, we can assume that
$\epsilon_{n+1}<\epsilon_n$ for every $n$. Fix an arbitrary weight
$v(x)$ with $\widehat{(\sqrt{v})}$ supported on $[-1, 1]$ and
$\int vdx=1$. Choose a sequence of functions $f_k\in C(\R)$,
$f_k(x)=0$ for every $|x|>k$, which is dense in
$L^2_v(\mathbb{R})$ (as it will automatically be in every space
$L^2_w(\mathbb{R})$, $w\leq v$).

We construct the sequence $\L$ by induction. At the $k$-th step
assume that $\{\l_n\}_{1\leq n < N}$ have already been defined
(where $N=N(k-1)$). For the segment $[-k,k]$, the function $f_k $
and $\delta=\frac{1}{2^k}$, we use Lemma 4 to find a number
$l_k>0$. Let $n_k\geq N$ be the first number for which
\begin{equation}
\eps_{n_k}<l_k.
\end{equation}

Choose arbitrary $\l_N, \l_{N+1}, \l_{N+2},...\l_{(n_k-1)}$ so
that (8) holds for every $n< n_k$ and a number $d_k$ for which
\begin{equation}
\frac{d_k}{\l_{(n_k-1)}}>1+\eps_{(n_k-1)}.
\end{equation}
Use Lemma 4 to find a trigonometric polynomial $Q_k$ so that
properties $(13)-(16)$ hold for $f_k, \delta_k, l_k, d_k$ and
$Q_k$. Add all of $\textrm{spec}\,Q_k$ as a block from the point
$n_k$ forward to form the sequence $\{\l_n\}_{1\leq n < N(k)}$.

Set $\L=\{\l_n\}_{n=1}^{\infty}$ and note that the combination of
properties (14), (15), (20), and (21) ensures that (8) holds for
$\L$.

To define the weight $w$ denote
\begin{equation}
E_k=\{x\in[-k,k]:\:\:|f_k(x)-Q_k(x)|<\delta_k\}.
\end{equation}
From (16) we have
\begin{equation}
m\{[-k,k]\setminus E_k\}<\delta_k.
\end{equation}
Define
\[
 w(x):=v(x) \inf_k
\{1\!\!1_{E_k}(x)+\eta_k1\!\!1_{\mathbb{R}/ E_k}(x)\},
\]
where $1\!\!1_E$ is the indicator function of $E$ and
$\eta_k=(2k\|f_k-Q_k\|_ {L^2_v(\mathbb{R})})^{-2}$. Since
$\Sigma_k\delta_k<\infty$, (23) implies that $0< w \leq v$ almost
everywhere.

Moreover, from (22) we have
\[
\int|f_k-Q_k|^2wdx
<(\delta_k)^2\int_{E_k}vdx+\eta_k\int_{\mathbb{R}/
E_k}|f_k-Q_k|^2vdx,
\]
so
\begin{equation}
\|f_k-Q_k\|_{L^2_w(\mathbb{R})}<\frac{1}{k}.
\end{equation}

Clearly $\L$ is a uniformly discrete sequence. Moreover, we may
suppose $\{\eps_n\}$ to decrease so slowly that the constant in
(1) satisfies $\delta>1$, so we can use Remark 2.2 to deduce that
$E(\L)$ is a Bessel system in $L^2_w(\R)$.

Fix $q>2$. The proof will be complete if we show that, for any
$f\in L^2_w(\mathbb{R})$ with $\|f\|=1$ and $\mu>0$, there exists
a trigonometric polynomial $Q$ such that $\textrm{spec}Q\subset
\L$, $\|f-Q\|_{L^2_w(\mathbb{R})}<\mu$ and $\|Q\|_{q}\leq 1$.

Given such $f$ and $\mu>0$, choose $k$ large enough so that
\begin{equation}
2+\delta_k<q
\end{equation}

\begin{equation}
\|f-f_k\|_{L^2_w\mathbb{R})}<\frac{\mu}{2}
\end{equation}

\begin{equation}
k\mu>2.
\end{equation}

We claim that for the polynomial $Q=Q_k$ all of the properties
described above hold. Indeed, from (13) and (25) we have
\[
\|Q\|_{q}\leq \|Q\|_{{(2+\delta_k)}}\leq \delta_k \leq 1,
\]
while (24) (26) and (27) imply that $\|f-Q_k\|<\mu$. This ends the
proof.$\:\:\:\:\:\:\:\:\:\:\:\:\:\:\:\:\:\:\:\:\:\:\:\:\:\:\:\:\:\:\:\:\:\:\:\:\:\:
\:\:\:\:\:\:\:\:\:\:\:\:\:\:\:\:\:\:\:\:\:\:\:\:\:\:\:\:\:\:\:\:\:\:\:\:\:\:
\:\:\:\:\:\:\:\:\:\:\:\:\:\:\:\:\:\:\:\:\:\:\:\:\:\:\:\:\:\:\:\:\:\:\:\:\:\:\:\:\:\:\:\:\:\:\square$

\subsection{}
Here we discus the "time-frequency" localization of the generator~$g$.
\begin{prop}
As in ~\cite{ol}, one can construct the function $g$ in Theorems 1
and 2 to be infinitely smooth and even the restriction to $\R$ of
an entire function.
\end{prop}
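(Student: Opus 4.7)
The plan is to re-run the proof of Theorem 3 with a different choice of the initial weight $v$. Recall that the generator is $g = \widehat{\sqrt{w}}$ with $w \leq v$, and $g$ extends to an entire function on $\C$ as soon as $\sqrt{w}(x)\, e^{a|x|} \in L^1(\R)$ for every $a > 0$, i.e., whenever $\sqrt{w}$ decays faster than any exponential. Since $w \leq v$ pointwise, it is enough to arrange for $v$ itself to be super-exponentially decaying.

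Concretely, I would replace the original $v$ (chosen so that $\widehat{\sqrt{v}}$ is supported in $[-1,1]$) by a Gaussian weight $v(x) = \pi^{-1/2} e^{-x^2}$, or any positive $L^1$ function decaying faster than any exponential. The only place where the construction actually uses $v$ beyond the bound $w \leq v$ is in Lemma 1, which produced the Bessel estimate for $E(\L)$ in $L^2_v(\R)$. Lemma 1 no longer applies directly, since $h := \widehat{\sqrt{v}}$ is now a Gaussian rather than a compactly supported function, so the main new task is to re-derive a Bessel estimate for this $v$.

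For this, note that $h$ is Schwartz, and hence so is the autocorrelation $A := h * \tilde h$, where $\tilde h(t) := \overline{h(-t)}$. The identity
\[
\Bigl\|\sum_{\l \in \L} a_\l\, h(\cdot - \l)\Bigr\|_{L^2(\R)}^2 = \sum_{\l, \mu \in \L} a_\l \bar{a}_\mu\, A(\mu - \l)
\]
combined with Schur's test reduces the Bessel bound to
\[
\sup_{\l \in \L} \sum_{\mu \in \L} |A(\l - \mu)| < \infty,
\]
which is immediate from the rapid decay of $A$ and the uniform discreteness of $\L$ (the sum is dominated by $C_N \sum_k (1 + k\delta)^{-N}$ for every $N$). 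Transferring back by the unitary operator $U_v$ yields the Bessel property of $E(\L)$ in $L^2_v(\R)$, and Remark 2.2 propagates it to $L^2_w(\R)$ for any $w \leq v$.

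With this replacement in hand, the inductive construction of $\L$ and of the weight $w$ proceeds verbatim as in the proof of Theorem 3, because every other step depends only on $v$ being a weight, on the density of $\{f_k\}$ in $L^2_v(\R)$, and on the pointwise bound $w \leq v$. The resulting $w$ satisfies $\sqrt{w}(x) \leq \pi^{-1/4} e^{-x^2/2}$, so
\[
g(z) = \frac{1}{\sqrt{2\pi}} \int_\R \sqrt{w}(x)\, e^{ixz}\, dx
\]
converges absolutely for every $z \in \C$ and defines an entire function of $z$; its restriction to $\R$ is automatically $C^\infty$. The main (and only substantive) new step is the verification of the Bessel property in the Gaussian setting; once orthogonality of translates is replaced by the Schur-type estimate above, everything else carries over unchanged.
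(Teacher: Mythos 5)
Your argument is correct, but it does more work than the paper's own proof, which is a one-line observation: instead of replacing $v$ by a Gaussian, the authors keep the original $v$ (with $\widehat{\sqrt{v}}$ compactly supported) and simply start the construction of Theorem 3 from a weight $v_0\leq v$ with sufficiently fast decay, e.g.\ $v_0(x)=c\,v(x)e^{-x^2}$. Since the final weight satisfies $w\leq v_0\leq v$, the Bessel property comes for free from Lemma 1 together with Remark 2.2 (which extends the Bessel bound to any weight dominated by $v$), and no new estimate is needed; the fast decay of $v_0$ then gives the decay of $\sqrt{w}$ and hence the entire extension of $g=\widehat{\sqrt{w}}$, exactly as in your last paragraph. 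Your route discards the compactly supported $\widehat{\sqrt{v}}$ altogether and re-derives the Bessel bound for Gaussian translates via the Gram matrix $A(\l-\mu)$, $A=h*\tilde h$, and Schur's test over the uniformly discrete set $\L$; this is a correct and standard almost-orthogonality argument, and it even has the small advantage of working for any separation constant $\delta>0$ (the paper must arrange $\delta$ large enough that the support of $\widehat{\sqrt{v}}$ fits inside $[-\delta/2,\delta/2]$). The trade-off is that your proof introduces a genuinely new analytic step where the paper needs none: by keeping the dominating weight $v$ fixed and only shrinking the starting weight, the authors reuse their existing machinery verbatim. Both proofs are valid; yours is more self-contained but less economical.
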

\begin{proof}
Indeed, in the proof of Theorem 3 it is enough to start with a
weight $v_0\leq v$ with sufficiently fast decay, so the same will
be true for $w$. The relation
\[
g(t):=\widehat{(\sqrt(w))}.
\]
implies the required property.
\end{proof}

On the other hand, the weight $w$, constructed in Theorem 3 is
"irregular", which means that the generator $g$ decreases slowly.
This is inevitable, due to the following

\begin{prop}
A generator $g$ in Theorem 1 cannot belong to $L^1(\R)$.
\end{prop}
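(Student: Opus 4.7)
The plan is to assume $g\in L^1(\R)$ and derive a contradiction by producing a one-parameter family of unit vectors whose QF coefficient sequences have $l^p$-norms that are simultaneously bounded below uniformly and tend to zero. Fix a compactly supported bump $\chi\in L^1\cap L^2(\R)$ with $\|\chi\|_2=1$, and set $f_\xi(t):=\chi(t)e^{i\xi t}$, so $\|f_\xi(\cdot-\tau)\|_2=1$ for every real $\tau$. A direct substitution yields
\[
\langle f_\xi(\cdot-\tau),g(\cdot-\lambda_n)\rangle=\psi_\xi(\lambda_n-\tau),\qquad \psi_\xi(v)=\int \chi(u)e^{i\xi u}\overline{g(u-v)}\,du,
\]
and a short Fourier calculation identifies $\hat\psi_\xi(t)$ as a constant multiple of $\overline{\hat g(t)}\,\hat\chi(\xi+t)$.

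Fix any $p\in(1,2)$. The dual formulation $(7)$ of the QF property, applied to $f_\xi(\cdot-\tau)$, reads
\[
\frac{1}{C(p)^p}\le\sum_n|\psi_\xi(\lambda_n-\tau)|^p\qquad(\tau\in\R).
\]
Integrating over $\tau\in[-T,T]$, exchanging sum and integral by Fubini, and using that the uniformly discrete set $\Lambda$ (with gap $\delta$) contains at most $2T/\delta+1$ points in any interval of length $2T$, one obtains
\[
\frac{2T}{C(p)^p}\le\Bigl(\frac{2T}{\delta}+1\Bigr)\|\psi_\xi\|_p^p.
\]
Sending $T\to\infty$ produces the $\xi$-uniform lower bound $\|\psi_\xi\|_p^p\ge\delta/C(p)^p>0$.

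It remains to show that on the contrary $\|\psi_\xi\|_p\to 0$ as $|\xi|\to\infty$. By Parseval and a change of variables,
\[
\|\psi_\xi\|_2^2=c\int|\hat g(\sigma-\xi)|^2|\hat\chi(\sigma)|^2\,d\sigma,
\]
and since $g\in L^1$ implies $\hat g\in C_0(\R)$ by Riemann--Lebesgue, the integrand tends pointwise to $0$ and is dominated by $\|\hat g\|_\infty^2|\hat\chi|^2\in L^1$; dominated convergence gives $\|\psi_\xi\|_2\to 0$. Separately, $|\psi_\xi(v)|\le\int|\chi(u)||g(u-v)|\,du$, so Fubini produces the uniform bound $\|\psi_\xi\|_1\le\|\chi\|_1\|g\|_1$. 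Log-convexity of the $L^p$-norms then gives $\|\psi_\xi\|_p\le\|\psi_\xi\|_1^{\theta}\|\psi_\xi\|_2^{1-\theta}$ with $\theta=2/p-1\in(0,1)$, so $\|\psi_\xi\|_p\to 0$, contradicting the lower bound. The delicate step is the need for the $L^1$-bound: $L^2$-smallness alone is useless in the QF range $p<2$---exploiting it there would require the forbidden frame endpoint $p=2$ ruled out by~\cite{cdh}---and it is precisely the hypothesis $g\in L^1$ that supplies the second endpoint needed for interpolation into some $p<2$ where~$(7)$ has content.
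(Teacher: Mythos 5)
Your argument is correct, but it is a genuinely different route from the paper's. The paper's proof passes to the weighted space: since $g\in L^1(\R)$ the weight $w=|\hat g|^2$ is continuous, so on a long interval $I$ one can extract a finite union of intervals $S$ with $mS>mI/2$ on which $w$ is bounded below; the $(QF)$ approximation property then transfers to $E(\L)$ in $L^2(S)$, and Theorem 1 of ~\cite{haol} (a Beurling--Landau type bound $D^-(\L)\geq mS/2\pi$ for exponential systems satisfying condition $(i)$ with some $q>2$ on finite unions of intervals) contradicts uniform discreteness once $mI$ is large. You instead stay in $L^2(\R)$ and argue from scratch: you test the dual inequality $(7)$ on modulated translates of a fixed bump, average over the translation parameter $\tau$ using only the separation constant $\delta$ (your counting bound $\sum_n\int_{-T}^{T}|\psi_\xi(\l_n-\tau)|^p d\tau\leq(2T/\delta+1)\|\psi_\xi\|_p^p$ is a correct Tonelli argument, and holds whether or not $\|\psi_\xi\|_p$ is finite), obtaining the $\xi$-uniform lower bound $\|\psi_\xi\|_p^p\geq\delta/C(p)^p$; then Riemann--Lebesgue gives $\|\psi_\xi\|_2\to0$, the hypothesis $g\in L^1$ gives the uniform bound $\|\psi_\xi\|_1\leq\|\chi\|_1\|g\|_1$, and log-convexity pushes the smallness down to the exponent $p<2$ where $(7)$ applies, yielding the contradiction. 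In effect your averaging over $\tau$ is a soft, self-contained substitute for the density theorem: the paper's route is shorter given the machinery of ~\cite{haol} and yields the stronger quantitative conclusion (integrable generator forces lower density at least $mS/2\pi$ for every such $S$), while yours uses nothing beyond the duality $(7)$ stated in the paper and makes visible exactly where integrability of $g$ enters (once via $\hat g\in C_0$, once as the $L^1$ endpoint needed to interpolate below $p=2$).
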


\begin{proof}
Indeed, suppose it does. Then the corresponding weight
$w(x)=|\hat{g}(x)|^2$ is continues. The system $E(\L)$ is a
$(QF)$-system in $L^2_w(\R)$. Clearly the same property holds in
the space $L^2_w(I)$ for an interval $I$. The set
\[
\{x\in I\:\: : \:\:w(x)>0\}
\]
is an open set of full measure. Take a finite union of intervals
$S\subseteq I$ such that:
\[
mS>\frac{mI}{2}\:\:\: ;\:\:\:\inf_{x\in S}w(x)>0.
\]
Clearly $E(\L)$ is a $(QF)$-system in $L^2(S)$. Now we use Theorem
1 from ~\cite{haol}, where it is proved that if $S$ is a finite
union of intervals and the system $E(\L)$ satisfies the condition
$(i)$ in the Definition 1 (with some $q>2$), then the
Beurling-Landau estimate:
\[
 D^-(\L)\geq \frac{mS}{2\pi}
 \]
 still holds. This contradicts $(1)$, if $mI$ is sufficiently large.

\end{proof}

 As a contrast, notice that in Theorem A, for an
 appropriate uniformly discrete $\L$, the generator $g$ may belong to the Shwartz
 space $S(R)$, see ~\cite{olul2}.

 We conclude by a couple of other remarks.

$\textbf{Remark 2.3.}$ The lacunarity condition in Theorems 2 and
3 is sharp. Indeed, it is well known (see ~\cite{har}) that if
$\L$ is lacunary in the Hadamard sense, that is
$\lambda_{n+1}/\lambda_n
>c>1$, then the system $\{e^{i\l t}\}_{\l \in \L}$  cannot be complete in
$L^2_w(\R)$.

$\textbf{Remark 2.4.}$ By appropriate modification of the proof of
Theorem 3, $\L$ in this result (as well as in Theorems 1 and 2)
can be made  a "small perturbation" of integers, as in the
equality $(3)$. However, again, in contrast to Theorem A, the
perturbations are not arbitrary. In particular $\alpha_n$ cannot
decrease as $O(|n|^{-s})$, $s>0$. This can be proved  similarly to
the corresponding  remark in ~\cite{olg1}.


\begin{thebibliography}{0}
%\bibitem{bu}A.~Beurling, ``The Collected Works of Arne Beurling,
%Vol.2, Harmonic Analysis'' (L.~Carleson, P.~Malliavin,
%J.~Neuberger, and J.~Wermer, Eds.), pp. 341-350, Birkh\"{a}user,
%Boston, 1989.
\bibitem{ast}A.~Aldroubi, Q.~Sun, and W.~S.~Tang,
$p$-frames and shift invariant subspaces of $L^p$, J. Fourier
Anal. Appl. 7, no. 1 (2001), 1-21.



\bibitem{cdh}O.~ Christensen, B.~ Deng and C.~Heil, Density of Gabor frames, Appl. Comp. Harm. Anal. 7
(1999), 292-304.
\bibitem{haol}S.N.~Hahamov and A.~Olevskii, Sparse exponential systems: completeness with
estimates, Israel Jour. of Math 158 (2007),205-215.
\bibitem{har}P.~Hartman, The divergence of non-harmonic gap series, Duke Math. J.(1942) 404-305.
\bibitem{katz}Y.~Katznelson, An introduction to Harmonic
Analysis, 2nd ed., Dover Publications, Inc., New York, 1976.

%\bibitem{la}H.J.~Landau, Necessary density conditions for sampling
%and interpolation of certain entire functions, Acta Math. 117
%(1967), 37-52.
\bibitem{olg}G.~Kozma, A.~Olevskii, Menshov representation spectra,
J. Analyse Math. 84 (2001) 361-393.
\bibitem{olg1}G.~Kozma, A.~Olevskii, Random Menshov spectra, Proc.
Amer. Math. Soci. 131, no 6 (2003), 1901-1906.
\bibitem{land}H.J.~Landau, A sparse sequence of exponentials closed on large sets, Bull. Am. Math.
Soc.(1964)., 70, 566–569.
\bibitem{ol}A.~Olevskii, Completness in $L^2(\mathbb{R})$ of
almost integer translates, C. R. Acad. Sci. Paris, Ser. I 324
(1997), 987-991.
\bibitem{ol1}A.~Olevskii, Approximation by translates in
$L^2(\R)$, Real Anal. Exch. 24, no 1 (1998/9), 43-44.
\bibitem{olul2}A.~Olevskii,A.~Ulanovskii, Almost integer
translates, Do nice generators exsist?, J. of Fourier Anal. Appl.
10, no.1 (2004), 93-104.
\bibitem{rs} J.~Ramanathan, T.~Steger, Incompleteness of sparse coherent states,
App. and Comp. Harmonic Anal., 2,(1995), 148-153.
\bibitem{you}R.~Young, An introduction to nonharmonic Fourier
series, Academic Press, New York, (1980).
%\bibitem{zyg}A.~Zygmund, Trigonometric Series, 2nd ed., Cambridge
%University Press, 1959.
\end{thebibliography}
\end{document}